\documentclass[preprint, 12pt, authoryear]{elsarticle}

\usepackage{epstopdf}
\usepackage[english]{babel}
\usepackage{babel}
\usepackage{fontenc}
\usepackage{graphicx}
\usepackage{amsmath,amsthm,amssymb}
\usepackage[colorlinks=true,citecolor=blue]{hyperref}
\bibpunct{[}{]}{,}{n}{,}{,}
\usepackage{subfigure}
\usepackage{color}
\usepackage{newlfont}
\usepackage{array}
\usepackage{bigints}
\usepackage{multirow}
\usepackage{multicol}
\usepackage{caption}
\usepackage{subcaption}
\usepackage{tabularx}
\usepackage{longtable} 
\usepackage{ifthen}
\usepackage{alltt}
\usepackage{enumitem}
\usepackage[text={6.25in,8.5in},centering]{geometry}
\usepackage{tikz}
\usepackage{pdflscape}
\usepackage{float}
\usepackage{xparse}
\usepackage{orcidlink}
\usepackage{lineno}
\usepackage{epstopdf}
\usetikzlibrary{shapes,arrows}

\renewcommand{\arraystretch}{2}
\NewDocumentCommand{\evalat}{sO{\big}mm}{%
	\IfBooleanTF{#1}
	{\mleft. #3 \mright|_{#4}}
	{#3#2|_{#4}}%
}

\newcommand{\ba}{\begin{array} }
\newcommand{\ea}{\end{array} }
\newcommand{\bae}{\begin{eqnarray}}
\newcommand{\eae}{\end{eqnarray}}
\newcommand{\bea}{\begin{eqnarray*}}
\newcommand{\eea}{\end{eqnarray*}}
\newcommand{\be}{\begin{equation}}
\newcommand{\ee}{\end{equation}}

\newcommand{\f}{\displaystyle\frac}

\newcommand{\ld}{\left\{ }
\newcommand{\rd}{\right\} }

\newcommand{\lx}{\left(}
\newcommand{\rx}{\right)}
\newcommand{\lz}{\left[ }
\newcommand{\rz}{\right] }

\def\to{{\rightarrow}}

\def\a{{\alpha}}
\def\b{{\beta}}

\def\d{{\delta}}

\def\g{{\gamma}}

\def\la{{\lambda}}

\def\th{{\theta}}

\def\to{{\rightarrow}}

\numberwithin{equation}{section}

\newtheorem{proposition}{\bf Proposition}[section]

\input epsf.sty
\journal{Applied Mathematics and Computations}

\begin{document}

\begin{frontmatter}


\title{Addendum to "Persistence and extinction in an Elk-Wolf prey-predator system with refuge and inter-regional movement. Appl. Math. Comput. 514 (2026) 129834"}

\author[inst1]{Rajesh Das}
\ead{rajesh_d@amsc.iitr.ac.in}

\author[inst2]{Dibakar Ghosh}
\ead{dibakar@isical.ac.in}

\author[inst1]{Sourav Kumar Sasmal \corref{cor1}}
\ead{sourav.sasmal@amsc.iitr.ac.in}


\cortext[cor1]{Corresponding author.}

\address[inst1]{Department of Applied Mathematics and Scientific Computing, Indian Institute of Technology Roorkee, Roorkee, Uttarakhand - 247667, India}

\address[inst2]{Physics and Applied Mathematics Unit, Indian Statistical Institute, 203, B. T. Road, Kolkata - 700108, India}

\begin{abstract}
The elk–wolf model with movements between refuge and open habitat was put forward in Maji et al. \cite{maji2026persistence}, which is rigorously re-examined in this remark. We re-evaluate the local and global stability analyses, especially the construction of the Lyapunov function, and provide mathematical clarifications on boundedness, model formulation, and the existence of equilibria. The sensitivity and numerical results are re-examined for consistency and re-producibility, and the Hopf bifurcation conditions are re-derived using the proper transversality criteria. The purpose of this note is to support future studies of predator-prey systems based on refuges by offering mathematically consistent conditions.
\end{abstract}

\begin{keyword}
Elk-wolf framework
\sep global stability \sep Hopf bifurcation \sep stability and direction of limit cycle
\end{keyword}
\end{frontmatter}


\section{Introduction}\label{sec_introduction}

Resource–consumer interactions are central to ecological stability and species coexistence, with the elk–wolf system serving as a classical predator–prey example \cite{hebblewhite2013consequences, trump2022sustainable}. Prey species often reduce predation risk by seeking refuge in safer habitats. Banff National Park in Canada acts as a managed refuge for elk from the nearby Bow Valley, where elk–wolf interactions are frequent. Movement of elk between refuge and non-refuge areas can significantly influence system dynamics \cite{goldberg2014consequences}. This study formulates and analyzes a mathematical model to examine these refuge-mediated elk–wolf interactions. \\

Goldberg et al. \cite{goldberg2014consequences} analyzed five distinct elk–wolf mathematical models under different ecological scenarios relevant to Banff National Park and the adjacent Bow Valley region. Their study indicated that the model, which separates the Banff townsite elk population from that of the Bow Valley, while representing elk–wolf interactions in the Bow Valley through a classical Lotka–Volterra type predator–prey framework, provided the most appropriate description for the ecological setting considered. However, elk–wolf interactions also occur within the Banff townsite area. Incorporating this additional ecological realism, Maji et al. \cite{maji2026persistence} formulated and investigated the elk–wolf system
\begin{eqnarray}\label{system_AMC-2025}
    \frac{dE}{dt}&=& \a E\lx 1-\f{E}{K}\rx-\g EP-q\psi E,\nonumber\\
    \frac{dN}{dt}&=& \b N+\mu E-\xi NP,\\
    \frac{dP}{dt}&=& \theta_1\g EP+\theta_2\xi NP-\eta P,\nonumber
\end{eqnarray}
with the initial conditions: $E(t=0)\ge 0,\ N(t=0)\ge 0,\ \text{and}\ P(t=0)\ge 0.$ There they divided the elk population into two subpopulations. $E$ represents the elk portion that stays at the Banff townsite area, whereas $N$ denotes the elk portion that lives in the Bow Valley wild area. $P$ represents the predator wolves that prey on both subpopulations, but the encounter rate at the Banff townsite region ($\g$) is very low compared to the Bow Valley region ($\xi$). $\a$ represents the growth (birth - death - elk dispersal from Banff townsite to Bow Valley region) rate of $E$ population without predation, whereas $\b$ represents the growth (birth - death) rate of $N$ population without predation. $\eta$ represents the death rate of wolf ($P$) population. $\th_1,\ \th_2$ are the biomass conversion efficiencies to the wolf population from elk populations of the Banff area and the Bow Valley area, respectively. $\mu$ is the natural elk movement rate from the Banff area to the Bow Valley region. $q$ and $\psi$ are the relocation rate and relocation effort. They assumed linear functional responses for species interactions. The default values of these parameters for numerical simulation are given in Table \ref{table_parameters}. \\

They established that system \eqref{system_AMC-2025} is bounded and positively invariant within the biologically feasible region. The existence of all possible equilibria was determined, and their local stability was analyzed using the Routh–Hurwitz criteria. Furthermore, by constructing an appropriate Lyapunov function, sufficient conditions were derived for the global stability of the coexistence equilibrium. The study also demonstrated the occurrence of the Hopf bifurcation in the vicinity of the unique coexistence equilibrium, leading to sustained oscillatory dynamics in the $N$–$P$ plane, as the $E$ population eventually becomes extinct beyond the bifurcation threshold. They also determined the stability and direction of the bifurcating limit cycle using normal form theory as developed in Hassard et al. \cite{hassard1981theory} and Kuznetsov et al. \cite{ kuznetsov1998elements}. In the numerical study, the parameters $q,\ \psi,\ \b,\ \mu,\ \xi,$ and $\eta$ were estimated via a least-squares nonlinear fitting procedure implemented in Mathematica. Furthermore, PRCC analysis was conducted over a $100\%$ variation around baseline values using $400$ time points to examine parameter sensitivity. \\

In this comment, we reassess several analytical and numerical results of Maji et al. \cite{maji2026persistence}. The parameter set used therein does not appear to satisfy the boundedness conditions of system \eqref{system_AMC-2025}. We identify inaccuracies in the expressions for the coexistence equilibrium, its local stability analysis, the construction of the Lyapunov function, and the resulting global stability conditions. Certain inconsistencies also arise in the Hopf bifurcation criteria and in the derivation of the stability and direction of the bifurcating limit cycle. Furthermore, the reported Hopf bifurcation around the coexistence equilibrium seems to be misinterpreted. If the $E$ population becomes extinct beyond a threshold and the reduced $N$–$P$ subsystem exhibits oscillations, this corresponds to a branch point bifurcation rather than a Hopf bifurcation. Indeed, once $E$ vanishes, the $N$–$P$ subsystem reduces to a Lotka–Volterra model, which admits oscillatory solutions without invoking a Hopf mechanism. The PRCC results and the stability regions in Fig.~7 of \cite{maji2026persistence} also require clarification, as the coexistence equilibrium does not exist throughout a substantial portion of the region indicated therein. 

\begin{table}[]
    \centering\small
    \setlength{\tabcolsep}{6pt}
    \renewcommand{\arraystretch}{1.1}
    \begin{tabular}{clcc}
       \hline
       Parameter  & Description & Values & Source\\
       \hline
       $\a$ & Banff townsite elk growth rate  & $0.25$ & \cite{goldberg2014consequences}\\
       $K$  & Carrying capacity of Banff townsite elk & $1000$ &\cite{goldberg2014consequences}\\
       $\g$ & Banff townsite elk capture rate of wolves  & $0.05$& Assumed \\
       $q$ & Relocation/harvesting rate of the Banff townsite elk & $0.02$ & Assumed\\
       $\psi$ & Relocation/harvesting effort & $0.01$ & \cite{maji2026persistence}\\
       $\b$ & Bow Valley elk growth rate & $0.16$ & \cite{goldberg2014consequences}\\
       $\mu$ & Elk movement rate from Banff area to Bow Valley & $0.10$ & Assumed\\
       $\xi$ & Bow Valley elk capture rate of wolves  & $0.10$ & Assumed\\
       $\th_1$ & Conversion efficiencies of wolves for Banff townsite elk & $0.001$ & Assumed\\
       $\th_2$ & Conversion efficiencies of wolves for Bow Valley elk  & $0.01$ & Assumed\\
       $\eta$ & Death rate of wolves & $0.30$ & \cite{goldberg2014consequences}\\
       \hline
    \end{tabular}
    \caption{Default parameter values and their definitions used in system \eqref{system_AMC-2025}.}
    \label{table_parameters}
\end{table}


\section{Mathematical Analysis}\label{sec_mathematical analysis} 

\subsection{Positivity and boundedness of the system}\label{subsec_boundedness}
\begin{proposition}
    The system \eqref{system_AMC-2025} is positively invariant and bounded if $0<\b<\eta-\th_1\g K.$
\end{proposition}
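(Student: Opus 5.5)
Positivity comes first and is standard. Each equation of \eqref{system_AMC-2025} has the form $\dot X = X\,\Phi_X(E,N,P)$ or is nonnegative on the face $X=0$. Integrating gives
\[
E(t)=E(0)\exp\!\Big(\int_0^t\big[\a(1-E/K)-\g P-q\psi\big]ds\Big)\ge 0,
\qquad
P(t)=P(0)\exp\!\Big(\int_0^t\big[\th_1\g E+\th_2\xi N-\eta\big]ds\Big)\ge 0 .
\]
For $N$ we have $\dot N\ge N(\b-\xi P)$ because $\mu E\ge 0$. A comparison argument, or the variation of constants formula $N(t)=e^{\int_0^t(\b-\xi P)}\big[N(0)+\int_0^t\mu E(s)e^{-\int_0^s(\b-\xi P)}ds\big]$, then gives $N(t)\ge 0$. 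Local existence and uniqueness follow from the locally Lipschitz right-hand side, so $\mathbb{R}^3_{+}$ is positively invariant.

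For the bound on $E$, drop the predation and harvesting terms to get $\dot E\le \a E(1-E/K)$. The logistic comparison then yields $\limsup_{t\to\infty}E(t)\le K$, and $E(t)\le \max\{E(0),K\}$ for all $t$.

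For $N$ and $P$ I would use the weighted total $W=\th_1 E+\th_2 N+P$ (and possibly add a multiple of $E$ to absorb the linear term $\mu E$). The weights are chosen so that the predation terms cancel: $-\th_1\g EP+\th_1\g EP=0$ and $-\th_2\xi NP+\th_2\xi NP=0$. Differentiating gives
\[
\dot W=\th_1\a E(1-E/K)-\th_1 q\psi E+\th_2\b N+\th_2\mu E-\eta P .
\]
Adding $\b W$ to both sides, the $N$-term only partially cancels, and the terms $-\eta P+\b P$ remain. This is where I expect the main obstacle. The $N$ subsystem has a positive linear growth term $\b N$ that is not self-limiting, so the usual trick $\dot W+\delta W\le C$ needs $\delta\le\b$ in the wrong direction. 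The condition $\b<\eta-\th_1\g K$ must therefore enter through a different weighting.

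Since the intended hypothesis involves $\th_1\g K$, I would try the function $W=N+cP$ with $c=1/\th_2$. This choice eliminates $\xi NP$ but leaves $+\tfrac{\th_1\g}{\th_2}EP$. Bounding $E\le K$ asymptotically, this term is at most $\tfrac{\th_1\g K}{\th_2}P$, so
\[
\dot W\le \b N+\mu K+\tfrac{1}{\th_2}(\th_1\g K-\eta)P .
\]
Then $\dot W+\delta W\le \mu K+(\b+\delta)N-\tfrac{1}{\th_2}(\eta-\th_1\g K-\delta)P$. Choosing $\delta$ with $0<\delta<\eta-\th_1\g K$ makes the $P$-coefficient negative.

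The leftover $(\b+\delta)N$ term is the crux. I would try to handle it by a case split on $P$. When $P\ge \b/\xi$, the $N$ equation is already dissipative: $\dot N\le \mu K-(\xi P-\b)N$. When $P<\b/\xi$, we have $\dot P\le P(\th_1\g K+\th_2\xi N-\eta)$, and $N$ cannot grow indefinitely without forcing $P$ up through the interaction term. The hypothesis $\b<\eta-\th_1\g K$ would be used to compare the exponential growth rate of $N$ with the decay rate of $P$ in a Lotka--Volterra-type invariant-region argument. Combining these bounds with the standard lemma that $\dot W+\delta W\le M$ implies $\limsup W\le M/\delta$ would give the absorbing set
\[
\Omega=\{(E,N,P)\in\mathbb{R}^3_+: E\le K,\ \th_2 N+P\le M/\delta\}.
\]
I expect the rigorous control of the $\b N$ term to be the hardest step, and possibly the point where the stated condition alone is insufficient.
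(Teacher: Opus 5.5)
Your positivity argument and the bound $E(t)\le\max\{E(0),K\}$ are fine. The step you flag, however, cannot be completed, because the boundedness claim is false on part of $\mathbb{R}^3_+$. The face $\{P=0\}$ is invariant, and on it $\dot N=\beta N+\mu E\ge\beta N$. For example, the initial condition $(0,N_0,0)$ with $N_0>0$ gives $N(t)=N_0e^{\beta t}\to\infty$ for every parameter set with $0<\beta<\eta-\theta_1\gamma K$; the hypothesis $\beta>0$ is exactly what drives this.

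Consequently, no inequality $\dot W+\delta W\le M$ can hold pointwise for any $W=aE+bN+cP$ with $a,c\ge 0$ and $b>0$. At a point $(E,N,0)$ with $0<E\le K$ one has $\dot W+\delta W\ge b(\beta+\delta)N-C$, which exceeds any $M$ once $N$ is large. By continuity the inequality also fails at nearby interior points with small $P>0$. This is precisely why your leftover $(\beta+\delta)N$ term cannot be absorbed.

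Your case split on $P\gtrless\beta/\xi$ does not change this. It is meant to feed the same lemma, and the ``invariant-region argument'' in which the hypothesis would enter is never specified. How far $N$ grows while $P<\beta/\xi$ depends on how small $P$ has become, and the hypothesis does not control that. A correct statement must at least require $P(0)>0$. Boundedness would then have to come from the predator's feedback on $N$ over whole Lotka--Volterra-type excursions, not from a pointwise differential inequality for a linear combination of $N$ and $P$. One candidate is to control, excursion by excursion, the quantity $\theta_2\xi N-\eta\ln N+\xi P-\beta\ln P$. Its derivative along the full system, $\theta_2\xi\mu E-\eta\mu E/N+\xi\theta_1\gamma EP-\beta\theta_1\gamma E$, is not sign-definite, so real work is needed there.

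The paper's proof does not supply this ingredient either. It uses the same function $X=\theta_2N+P$ and reaches your inequality $\dot X\le\theta_2\beta N+\theta_2\mu K-(\eta-\theta_1\gamma K)P$. It then asserts $\dot X\le\theta_2\mu K-(\eta-\theta_1\gamma K-\beta)X$ ``since $\theta_2N\le X$ and $P\le X$''. The bound $P\le X$ is applied in the wrong direction. $P$ carries the negative coefficient $-(\eta-\theta_1\gamma K)$, so replacing $P$ by the larger $X$ lowers the right-hand side. The asserted step is in fact equivalent to $(\eta-\theta_1\gamma K)\theta_2N\le\beta P$, which fails whenever $P$ is small compared with $N$. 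It fails in particular along the explicit solution $(0,N_0e^{\beta t},0)$, where the paper's final inequality would force $\limsup X\le\theta_2\mu K/(\eta-\theta_1\gamma K-\beta)$ while actually $X\to\infty$.

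The preliminary split $N\gtrless N^\#$ does not rescue the argument. Trajectories cross $N=N^\#$, and Case II never uses $N>N^\#$. The restriction $\beta<\eta-\theta_1\gamma K$ enters only through the constant $B=\eta-\theta_1\gamma K-\beta$ that the invalid step produces. So your suspicion is correct: the stated hypothesis is not what controls the $\beta N$ term, and you should not patch your argument by importing the paper's step.
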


\begin{proof}
For $E=0,\ N=0,$ and $P=0$, we have, $\f{dE}{dt}\ge 0,\ \f{dN}{dt}\ge 0,$ and $\f{dP}{dt}\ge 0.$ Then for any initial conditions from the first octant that is, $\lx E(0),\ N(0),\ P(0)\rx \in \mathbb{R}^3_+$, the solutions to system \eqref{system_AMC-2025} will remain within the first octant. Therefore, system \eqref{system_AMC-2025} is positively invariant in $\mathbb{R}^3_+$. \\

For the boundedness of system \eqref{system_AMC-2025}, we have
\begin{eqnarray*}
    \f{dE}{dt} \le  \a E\lx 1-\f{E}{K}\rx \implies E_{max} \le  K.
\end{eqnarray*}
We also have
\begin{eqnarray*}
    &\f{dP}{dt}& = P\lx \theta_1\g E+\theta_2\xi N-\eta\rx,\\
    && \le P\lx \theta_1\g K+\theta_2\xi N-\eta\rx. 
\end{eqnarray*}
We assume, $N^\#=\f{\eta-\theta_1\g K}{\theta_2\xi}>0$, provided $\eta>\theta_1\g K$. 
Now, there are two subcases, namely, $N<N^\#$ and $N>N^\#.$\\
\textbf{Case-I:} $N<N^\#$\\
Easily explainable that $\f{dP}{dt}<0\implies P(t)<P\ (t=0).$ Eventually, all the populations are bounded with the bounds: $E\le K,\ N<N^\#,$ and $P<P~(t=0).$ \\
\textbf{Case-II:} $N>N^\#$\\
Consider, $X(t)=\theta_2N(t)+P(t)$, which implies:
\begin{eqnarray*}
    \f{dX}{dt} &=& \th_2\b N+\th_2\mu E-\th_2\xi NP +\th_1\g EP+\th_2\xi NP-\eta P,\\
    &=& \th_2\b N+\th_2\mu E +\th_1\g EP-\eta P,\\
    &\le & \th_2\b N+\th_2\mu K-P(\eta-\th_1\g K),\\
    &\le & \th_2\mu K-\lx \eta-\th_1\g K-\b\rx X.
\end{eqnarray*}
Since $\th_2N(t)\le X(t),$ and $P(t)\le X(t)$ for all non negative $N(t),\ P(t).$
We write, $\f{dX}{dt}\le A-BX$, where $A=\th_2\mu K$, and $B=\eta -\th_1\g K-\b$. If $B>0$, that is if $\eta-\th_1\g K>\b>0$ then $\limsup_{t\to \infty}{X(t)}= \f{A}{B}$.\\
Thus, the solutions to system \eqref{system_AMC-2025} are bounded with $E(t)\le K, \ N(t)\le \f{\mu K}{\eta-\th_1\g K-\b},$ and $P(t)\le \f{\th_2\mu K}{\eta-\th_1\g K-\b}$. 
\end{proof}

\subsection{Equilibria of the system}\label{subsec_equilibria}
The system \eqref{system_AMC-2025} has two boundary equilibria, namely, the extinction equilibrium $X_0=\lx0,\ 0,\ 0 \rx$, the equilibrium without Banff townsite elk, $X_1=\lx 0,\ \f{\eta}{\th_2\xi},\ \f{\b}{\xi} \rx$, and the unique coexistence equilibrium $X^*=\lx E^*,\ N^*,\ P^*\rx$, where the expression of the densities at $X^*$ are given in \eqref{exp_coexistence}. \\

For the coexistence equilibria of system \eqref{system_AMC-2025}, we have the following set of equations 
\begin{eqnarray}\label{eq for coexistence}
   \a \lx 1 - \f{E^*}{K}\rx - \g P^* - q\psi  &=0,\nonumber\\
   \b N^* + \mu E^*-\xi N^*P^* &=0,\\
   \theta_1\g E^* + \theta_2\xi N^* - \eta  &=0.\nonumber
\end{eqnarray}

If $\f{\eta}{\th_2\xi}-\f{K\th_1\g\lx \a-q\psi\rx}{\a \th_2\xi}<N^*<\f{\eta}{\th_2\xi},$ solving system of equations \eqref{eq for coexistence}, the unique coexistence equilibrium $X^*$ of the system \eqref{system_AMC-2025} is given as $X^*=(E^*,\ N^*,\ P^*),$ where
\begin{eqnarray}\label{exp_coexistence}
    &N^*& = \f{m_1+\sqrt{m_1^2+4m_2}}{2},\nonumber\\
    &E^*& = \f{\eta-\th_2\xi N^*}{\th_1\g},\\
    &P^*& = \f{\th_1\g K\lx \a-q\psi\rx-\a\lx \eta-\th_2\xi N^*\rx}{\th_1\g^2 K}\nonumber,
\end{eqnarray}
and $m_1=\f{\a \eta\xi+\th_1\g K\lx \b\g+\xi q \psi-a\xi\rx - \th_2\g K\mu\xi}{\a\th_2\xi^2}$, $m_2=\f{K\mu\eta\g}{\a\th_2\xi^2}.$\\

Table \ref{table_existence_stability} summarizes the conditions for the existence and the local stability associated with the equilibria of system \eqref{system_AMC-2025}. 

\begin{table}[]
    \centering\small
    \setlength{\tabcolsep}{6pt}
    \renewcommand{\arraystretch}{1.1}
    \begin{tabular}{lll}
    \hline
        Equilibrium & Existence Conditions & Local Stability Conditions  \\
        \hline
        $X_0$ & Always exists & Always unstable\\
        $X_1$ & Always exists & $\a-q\psi<\f{\b \g}{\xi}$ (Spirally stable) \\
        $X^*$ & $\f{\eta}{\th_2\xi}-\f{\th_1\g K\lx \a-q\psi\rx}{\a \th_2\xi}<N^*<\f{\eta}{\th_2\xi}$ & $N^*>\max{\ld N_1,\ N_2\rd}$ and $F(N^*)>0$ \\ 
        \hline
    \end{tabular}
    \caption{The local stability and existence conditions of equilibria of system \eqref{system_AMC-2025}, where $N_1,\  N_2$, and $F(N^*)$ are given in \eqref{exp_N1N2FN}.}
    \label{table_existence_stability}
\end{table}
\begin{eqnarray}\label{exp_N1N2FN}
    & N_1 = & \f{\eta}{\th_2\xi}-\f{\th_1\g K}{\a(\xi-\g)\th_2\xi}\ld \xi(\a-q\psi)-\b\g\rd ,\nonumber\\
    & N_2 = & \f{\eta}{2\th_2\xi}-\f{\mu\g K}{2\a\xi}-\f{\th_1\g K}{2\a\th_2\xi}\ld \xi(\a-q\psi)-\b\g\rd,\\
    & F(N^*)= & A_{11}{N^*}^3 + A_{22}{N^*}^2+ A_{33}{N^*}+ A_{44},\nonumber
\end{eqnarray}
where,
\begin{eqnarray*}
    & A_{11} = & a_1a_3+\th_1\g^3 Ka_6,\\
    & A_{22} = & a_1a_4+a_2a_3+\th_1\g^3 Ka_7,\\
    & A_{33} = & a_1a_5+a_2a_4+\th_1\g^3 Ka_8,\\
    & A_{44} = & a_2a_5+\th_1\g^3 Ka_9,
\end{eqnarray*}
and
\begin{eqnarray*}
    & a_1 = & -\a \th_2 \xi^2, \\
    & a_2 = & \a \eta \xi + \th_1\g K\ld \b \g 
      - \xi (\a - q \psi)\rd, \\
    & a_3 = & \a^2 \th^2_2 \xi^2 (\xi - \g) 
      - \a \th_1\th_2^2 \g\xi^3 K, \\
    & a_4 = & \a \th_1\th_2\g\eta \xi^2 K 
      - \th_1^2\th_2 \g^2 \xi^2K^2 (\a - q \psi) 
      - \th_1\th_2\a \g \xi K\ld \b \g - \xi (\a - q \psi) \rd
      - 2 \th_2 \a^2 \xi \eta (\xi - \g), \\
    & a_5 = & \a^2 \eta^2 (\xi - \g) + \th_1 \g K \a \eta \ld \b \g - \xi (\a - q \psi)\rd, \\
    & a_6 = & \th_2^3 \xi^3\a^2, \\
    & a_7 = & \a K \th_2^2 \xi^2 \ld \th_1 \g (\a - q \g) - \th_2 \mu \xi \rd 
      - 3 \eta \a^2 \th_2^2 \xi^2, \\
    & a_8 = & 3 \th_2^2 \a^2 \xi \eta^2 + \th_1\th_2^2\xi^2 \mu\g K^2 (\a - q \psi) 
      - 2 \th_2\xi\a K \eta \ld \th_1\g(\a - q \psi)-\th_2\mu\xi\rd, \\
    & a_9 = & -\a^3 \eta^3 + \a K \eta^2 \ld \th_1 \g (\a - q \psi) - \th_2\mu\xi \rd
      - \th_1 \th_2 \xi \mu \eta \g K^2 (\a - q \psi).
\end{eqnarray*}
\subsection{Global stability of the coexistence equilibrium}\label{subsec_global stability}
We consider the following Lyapunov function to prove the global stability of the coexisting equilibrium $X^*$:
\begin{equation}\nonumber
    W\lx t\rx=\delta_1\lx E(t)-E^*-E^*\ln\f{E(t)}{E^*}\rx+\f{\delta_2}{2}\Big(N(t)-N^*\Big)^2+\delta_3\lx P(t)-P^*-P^*\ln\f{P(t)}{P^*}\rx,
\end{equation}
where $\delta_i>0$ for $i=1,2,3$. After differentiating with respect to time $t$ and grouping terms using the system \eqref{system_AMC-2025} and equations \eqref{eq for coexistence}, we have 
\begin{eqnarray*}
    W'(t)=-Q(x,y)-\g (\d_1-\theta_1\d_3)xz-\xi(\d_2N-\theta_1\d_3)yz,
\end{eqnarray*}
where $x=E-E^*,\ y=N-N^*,$ and $z=P-P^*$. $Q(x,y)$ is a quadratic function of $x$ and $y$. If we consider $\d_1>\theta_1\d_3$ and $\d_2N_{min}>\theta_2\d_3$ with $Q(x,y)$ being positive definite, then it implies the negativity of $W'(t)$. 

As we have $Q(x,y)= Ax^2-2Bxy+Cy^2$, with $A=\f{\a \d_1}{K},\ B=\f{\mu \d_2}{2}$, and $C=\f{\mu\d_2E^*}{N^*}$. The positive definiteness of $Q(x,y)$ is guaranteed with the condition $AC-B^2>0$, i.e., $4\a \d_1E^*>\mu\d_2KN^*.$

\subsection{Existence of Hopf bifurcation}
The coexistence equilibrium will be changing its stability through the Hopf bifurcation with respect to $\b$ at $\b^\#$ if the following conditions hold:
\begin{itemize}
    \item $\forall\  i=1,2,3,\ b_i(\b^\#)>0,$
    \item $b_1(\b^\#)b_2(\b^\#)-b_3(\b^\#)=0,$ and 
    \item $\f{d}{d\b}\lz Re(\la(\b))\rz|_{\b=\b^\#}\ne0,$
\end{itemize}
where 
\begin{equation}\label{eq_characteristic}
    \la^3+b_1\la^2+b_2\la+b_3=0,
\end{equation}
is the characteristic equation of the Jacobian evaluated at the unique coexistence equilibrium of system \eqref{system_AMC-2025}. Here,
\begin{align*}
    b_1&= \f{\a E^*}{K}-\b+\xi P^*,\\
    b_2&= -\f{\a \b E^*}{K}+\th_1\g^2E^*P^*+\th_2\xi^2N^*P^*+\f{\a \xi E^*P^*}{K},\\
    b_3&= E^*P^*\ld \th_1\g^2 (-\b+\xi P^*) + \th_2\g\mu\xi +\f{\th_2\a\xi^2N^*}{K}\rd.
\end{align*}

Now, at $\b=\b^\#,$ we have $\la_{1,2}=\pm i\sqrt{b_2}$, and $\la_3=-b_1,$ where $i=\sqrt{-1}.$ Then, for any $\epsilon>0$, $\b\in(\b^\#-\epsilon,\ \b^\#+\epsilon)$ then assume $\la_{1,2}=\phi_1\pm i\phi_2$. Thus, substituting these in the values of $\la_1$ in equation \eqref{eq_characteristic}, and separating the real and imaginary parts, we obtain

\begin{eqnarray}\label{eq_phi_1'}
    P_1\phi_1'-P_2\phi_2'+R_1=0,\nonumber\\
    P_2\phi_1'+P_1\phi_2'+R_2=0,
\end{eqnarray}
where,
\begin{eqnarray}\label{exp_P1P2R1R2}
    &P_1=&3(\phi_1^2-\phi_2^2)+2b_1\phi_1+b_2,\nonumber\\
    &P_2=&2b_1\phi_2+6\phi_1\phi_2,\nonumber\\
    &R_1=&b_1'(\phi_1^2-\phi_2^2)+b_2'\phi_1+b_3',\nonumber\\
    &R_2=&2b_1'\phi_1\phi_2+b_2'\phi_2.\nonumber
\end{eqnarray}
Thus, at $\b=\b^\#,$ we have the following
\begin{eqnarray}\label{exp_beta*}
    &P_1(\b^\#)=&-2b_2(\b^\#),\nonumber\\
    &P_2(\b^\#)=&2b_1(\b^\#)\sqrt{b_2(\b^\#)},\nonumber\\
    &R_1(\b^\#)=&-b_1'(\b^\#)b_2(\b^\#)+b_3'(\b^\#)\\
    &R_2(\b^\#)=&b_2'(\b^\#)\sqrt{b_2(\b^\#)}.\nonumber
\end{eqnarray}
Then solving for $\phi_1'$ from equations \eqref{eq_phi_1'}, and using expressions \eqref{exp_beta*}, we obtain

\begin{eqnarray*}
    &\phi_1'&=-\f{P_1(\b^\#)R_1(\b^\#)+P_2(\b^\#)R_2(\b^\#)}{P_1^2(\b^\#)+P_2^2(\b^\#)},\\
    & &=-\f{-2b_2(\b^\#)\ld -b_1'(\b^\#)b_2(\b^\#)+b_3'(\b^\#)\rd+2b_1(\b^\#)b_2(\b^\#)b_2'(\b^\#)}{4b_2^2(\b^\#)+4b_1^2(\b^\#)b_2(\b^\#)},\\
    & &= \f{b_3'(\b^\#)-b_1(\b^\#)b_2'(\b^\#)-b_1'(\b^\#)b_2(\b^\#)}{2\ld b_1^2(\b^\#)+b_2(\b^\#)\rd},\ \text{since,}\ b_2(\b^\#)>0,\\
    & & =\f{1}{2(b_1^2(\b^\#)+b_2(\b^\#))}\left.\f{d}{d\b}\ld b_3(\b)-b_1(\b)b_2(\b)\rd \right|_{\b=\b^\#}.
\end{eqnarray*}
Now, since $b_i(\b^\#)>0$ for $i=1,2,3,$ we have the condition for the Hopf bifurcation to occur is
\begin{equation}\nonumber
   \left.\f{d}{d\b}\ld b_3(\b)-b_1(\b)b_2(\b)\rd \right|_{\b=\b^\#}\ne0.
\end{equation}

\begin{figure}[H]
    \centering
    \includegraphics[width=0.9\linewidth, height=0.6\linewidth]{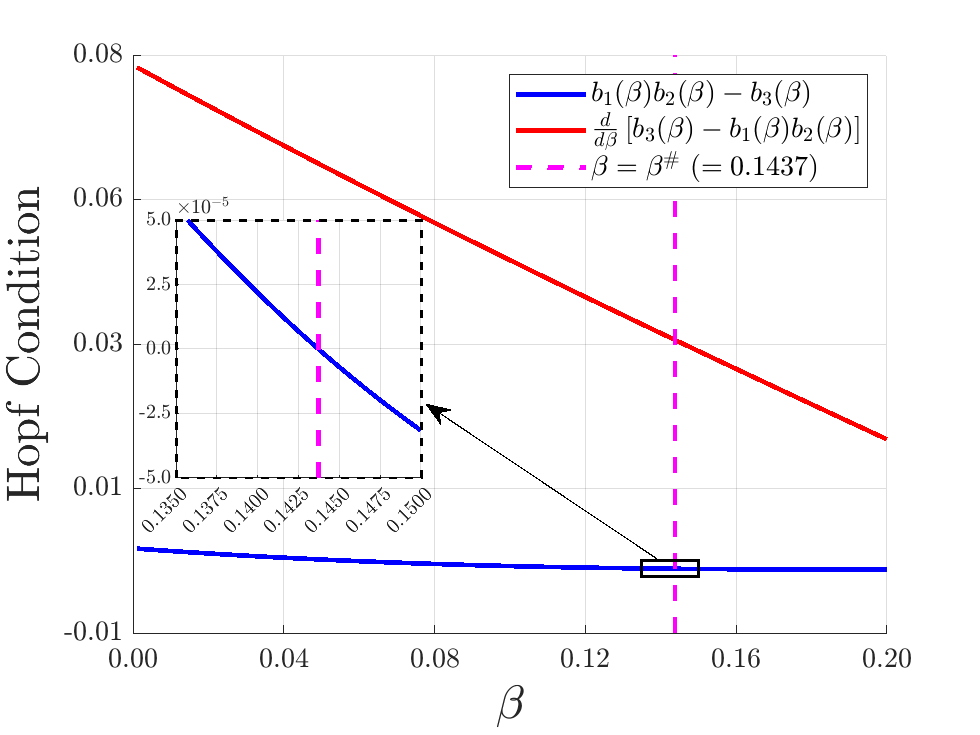}
    \caption{Figure shows the existence and transversality conditions for occurance of the Hopf bifurcation around the unique coexistence equilibrium $X^*$ of system \eqref{system_AMC-2025} with respect to bifurcation parameter $\b$, when $\g\ (=0.11)>\xi\ (=0.10),$ and the other parameters are as in Table \ref{table_parameters}. At $\b=\b^\#\ (=0.1437)$, the coexistence equilibrium satisfies the existence and transversality condition of Hopf bifurcation. That is, $b_1(\b^\#)b_2(\b^\#)-b_3(\b^\#)=0,$ and $\left.\f{d}{d\b}\ld b_3(\b)-b_1(\b)b_2(\b)\rd \right|_{\b=\b^\#}\approx 0.03\ (\ne0)$.}
    \label{fig_Hopf existence and transversality condition}
\end{figure}

\subsection{Direction, stability of limit cycle}

For the stability and direction of the bifurcating limit cycle, the variational matrix of system \eqref{system_AMC-2025} at the coexistence equilibrium $X^*$,
\begin{equation}\label{Jacobian at coexistence}\nonumber
    J_{X^*}= {\renewcommand{\arraystretch}{1.0}\begin{bmatrix}
        B_{11} & 0&B_{13}\\
        B_{21} & B_{22} & B_{23}\\
        B_{31} & B_{32} & 0
    \end{bmatrix}},   
\end{equation}
where $B_{11}=-\f{\a E^*}{K},\ B_{13}=-\g E^*,\ B_{21}=\mu,\ B_{22}=-\xi P^*, \ B_{23}=-\xi N^*,\ B_{31}=\th_1\g P^*,$ and $B_{32}=\th_2\xi P^*$. The eigenvectors of the matrix $J_{X^*}$ corresponding to the eigenvalues $\lambda_1=i\Psi_0,\ (\Psi_0=\sqrt{b_2})$ and $\lambda_3=-b_1$, respectively are given as
\begin{equation}\label{exp_U V vectors}
    U={\renewcommand{\arraystretch}{1.0}\begin{bmatrix}
        c_{11}+ic_{12}\\
        c_{21}+ic_{22}\\
        c_{31}+ic_{32}
    \end{bmatrix}},\  \text{and}\ V={\renewcommand{\arraystretch}{1.0}\begin{bmatrix}
        c_{13}\\
        c_{23}\\
        c_{33}
    \end{bmatrix}}, 
\end{equation}
where 
$c_{11}=1,\ c_{12}=0,\ c_{13}=1,\ c_{21}=-\f{B_{13}B_{31}+\Psi_0^2}{B_{13}B_{32}},\ c_{22}=-\f{B_{11}\Psi_0}{B_{13}B_{32}}, \ c_{23}=\f{b_1(B_{11}+b_1)-B_{13}B_{31}}{B_{13}B_{32}},\ $
$c_{31}=-\f{B_{11}}{B_{13}},\ c_{32}=\f{\Psi_0}{B_{13}},$ and $c_{33}=-\f{B_{11}+b_1}{B_{13}},$ provided $B_{11}B_{13}B_{31}+B_{13}B_{21}B_{32}+B_{22}B_{23}B_{32}=B_{11}B_{22}(B_{11}+B_{22}),$ and $\Psi_0^2=B_{11}B_{22}-B_{13}B_{31}-B_{23}B_{32}$.\\

Vectors $U$ and $V$ are normal to each other if the following two conditions hold:
\begin{eqnarray*}
    && \lx \Psi_0^2 +B_{13}B_{31} \rx \ld B_{13}B_{31} - b_1(B_{11}+b_1) \rd+B_{11}B_{32}^2(B_{11}+b_1)=0, \text{and}\\
    && B_{11}\ld b_1(B_{11}+b_1) - B_{13}B_{31} \rd +B_{32}^2(B_{11}+b_1)=0.
\end{eqnarray*}

Now, consider the following transformations: 
\begin{eqnarray}\label{transformations}
    &E=& E^* + x + z,\nonumber\\
    &N=& N^* + c_{21}x + c_{22}y + c_{23}z,\nonumber\\
    &P=& P^* + c_{31}x + c_{32}y + c_{33}z,\nonumber
\end{eqnarray}
where, $c_{ij},\ \forall \ i,j=1,2,3$ are given in \eqref{exp_U V vectors}, with $c_{11}=c_{13}=1$ and $c_{12}=0$. Applying the above transformation to system \eqref{system_AMC-2025}, and differentiating with respect to time $t$, we will have the following transformed system \eqref{system_transformed}.
\begin{align}\label{system_transformed}
    &\f{dx}{dt}=   \f{1}{|C|}\ld (c_{22}c_{33}-c_{23}c_{32})H_1 + c_{32}H_2 - c_{22}H_3\rd &=M_1,\nonumber\\
    &\f{dy}{dt}=  \f{1}{|C|}\ld (c_{23}c_{31}-c_{21}c_{33})H_1 + (c_{33}-c_{31})H_2 + (c_{21}-c_{23})H_3\rd &=M_2,\\
    &\f{dz}{dt}=  \f{1}{|C|}\ld (c_{21}c_{32}-c_{22}c_{31})H_1 - c_{32}H_2 + c_{22}H_3\rd &=M_3,\nonumber
\end{align}
where, 
\begin{eqnarray}\label{determinant_C matrix}
    & |C| &=det\lx{\renewcommand{\arraystretch}{1.0}\begin{bmatrix}
        1 & 0 & 1\\
        c_{21} & c_{22} & c_{23}\\
        c_{31} & c_{32} & c_{33}  
    \end{bmatrix}}\rx,\nonumber\\
    & &= c_{22}(c_{33} - c_{31}) +c_{32}(c_{21} - c_{23}),
\end{eqnarray}
and $H_i\ \text{for}\ i=1,2,3$ are the right-hand side of system \eqref{system_AMC-2025} when we put the transformed values of $E,\ N, \ P$ as given in \eqref{transformations}. That is,
\begin{eqnarray}\label{exp_H1 H2 H3}
    & H_1 = & \a\lx E^* + x + z \rx \ld \lx 1-\f{E^* + x + z}{K}\rx -\f{\g}{\a} \lx P^*+c_{31}x+c_{32}y+c_{33}z\rx - \f{q\psi}{\a} \rd, \nonumber\\
    & H_2 = & \lx N^*+c_{21}x+c_{22}y+c_{23}z\rx \ld \b - \xi \lx P^*+c_{31}x+c_{32}y+c_{33}z\rx \rd+\mu\lx E^* + x + z\rx ,\nonumber\\
    & H_3 = & \lx P^*+c_{31}x+c_{32}y+c_{33}z\rx \ld \th_1\g \lx E^* + x + z\rx  + \th_2\xi \lx N^*+c_{21}x+c_{22}y+c_{23}z\rx -\eta \rd.\nonumber
\end{eqnarray}

Clearly, $(0,\ 0,\ 0)$ is an equilibrium of transformed system \eqref{system_transformed}. The Jacobian of system \eqref{system_transformed} will be 
\begin{equation}\label{jacobian_transformed system}
    J_X={\renewcommand{\arraystretch}{1.1}\begin{bmatrix}
        \f{\partial M_1}{\partial x} &  \f{\partial M_1}{\partial y} &  \f{\partial M_1}{\partial z}\\
        \f{\partial M_2}{\partial x} &  \f{\partial M_2}{\partial y} &  \f{\partial M_2}{\partial z}\\
        \f{\partial M_3}{\partial x} &  \f{\partial M_3}{\partial y} &  \f{\partial M_3}{\partial z}  
    \end{bmatrix}},\nonumber
\end{equation}
with the conditions $\f{\partial M_1}{\partial x}=\f{\partial M_1}{\partial z}=\f{\partial M_2}{\partial y}=\f{\partial M_2}{\partial z}=\f{\partial M_3}{\partial x}=\f{\partial M_3}{\partial y}=0,\ \f{\partial M_2}{\partial x}=-\f{\partial M_1}{\partial y}=\Psi_0,$ and $\f{\partial M_3}{\partial z}=D_1.$ 

Now, we will calculate 
\begin{align}\label{exp_g20 g11 g02}
g_{20} &= \frac{1}{4} \lz \lx \f{\partial^2 M_1}{\partial x^2}
- \f{\partial^2 M_1}{\partial y^2} + 2 \f{\partial^2 M_2}{\partial x \partial y}\rx + i \lx\f{\partial^2 M_2}{\partial x^2} - \f{\partial^2 M_2}{\partial y^2} - 2 \f{\partial^2 M_1}{\partial x \partial y}\rx \rz,\nonumber \\
g_{11} &= \frac{1}{4} \lz \lx \f{\partial^2 M_1}{\partial x^2} + \f{\partial^2 M_2}{\partial y^2}\rx + i \lx\f{\partial^2 M_2}{\partial x^2} + \f{\partial^2 M_1}{\partial y^2}\rx \rz, \nonumber\\
g_{02} &= \f{1}{4} \lz \left(\f{\partial^2 M_1}{\partial x^2}
- \f{\partial^2 M_1}{\partial y^2} - 2 \f{\partial^2 M_2}{\partial x \partial y}\right) + i \lx\f{\partial^2 M_2}{\partial x^2} - \f{\partial^2 M_2}{\partial y^2} + 2  \f{\partial^2 M_1}{\partial x \partial y}\rx \rz.\nonumber
\end{align}

Further,
\begin{equation}\label{exp_g21}\nonumber
g_{21} = G_{21} + 2 G_{110} w_{11} + G_{101} w_{20},
\end{equation}
where,
\begin{align}\label{exp_G21 G110 G101}
   G_{21}=& \f{1}{8} \lz \lx \f{\partial^3 M_1}{\partial x^3} + \f{\partial^3 M_1}{\partial x \partial y^2} + \f{\partial^3 M_2}{\partial x^2 \partial y} + \f{\partial^3 M_2}{\partial y^3} \rx + i \lx \f{\partial^3 M_2}{\partial x^3} + \f{\partial^3 M_2}{\partial x \partial y^2} - \f{\partial^3 M_1}{\partial x^2 \partial y} - \f{\partial^3 M_1}{\partial y^3} \rx \rz,\nonumber\\
   G_{110} = & \f{1}{2} \lz \lx \f{\partial^2 M_1}{\partial x \partial z} + \f{\partial^2 M_2}{\partial y \partial z} \rx + i \lx \f{\partial^2 M_2}{\partial x \partial z} - \f{\partial^2 M_1}{\partial y \partial z} \rx\rz, \nonumber\\
   G_{101} = & \f{1}{2} \lz \lx \f{\partial^2 M_1}{\partial x \partial z} - \f{\partial^2 M_2}{\partial y \partial z} \rx + i \lx \f{\partial^2 M_2}{\partial x \partial z} + \f{\partial^2 M_1}{\partial y \partial z} \rx \rz,\nonumber
\end{align}
and $w_{11},\ w_{20}$ are calculated from the relations $D_1 w_{11} = - h_{11},$ and $(|C| - 2 i \Psi_0) w_{20} = - h_{20},$ where $D_1=\f{\partial M_3}{\partial z}$, $M_3$ is given in \eqref{system_transformed}, $|C|$ is given in \eqref{determinant_C matrix}, and

\begin{eqnarray}\label{exp_h11 h20}
    && h_{11} = \f{1}{4} \lx \f{\partial^2 M_3}{\partial x^2} + \f{\partial^2 M_3}{\partial y^2} \rx,\nonumber\\
    && h_{20} = \f{1}{4} \lx \f{\partial^2 M_3}{\partial x^2} - \f{\partial^2 M_3}{\partial y^2} - 2i \f{\partial^2 M_3}{\partial x \partial y} \rx.\nonumber
\end{eqnarray}

Then the first Lyapunov coefficient is given by
\begin{equation}\label{exp_first lyapunov coefficient}\nonumber
l_1(0) = \f{i}{2\Psi_0} \lx g_{20} g_{11} - 2|g_{11}|^2 - \f{1}{3} |g_{02}|^2 \rx + \f{1}{2} g_{21}.
\end{equation}

The other required components are
\begin{align}\label{exp_other coefficients}
S_1 &= -\f{\mathrm{Re}\{l_1(0)\}}{p'(0)}, \nonumber\\
S_2 &= 2\,\mathrm{Re}\{l_1(0)\}, \\
S_3 &= -\f{\mathrm{Im}\{l_1(0)\} + S_1 q'(0)}{\Psi_0},\nonumber
\end{align}

where $p'(0) = \left.\f{d}{d\b} \mathrm{Re}(\lambda(\b))\right|_{\b=\b^\#},$ and $q'(0)= \left.\f{d}{d\b} \mathrm{Im}(\lambda(\b))\right|_{\b=\b^\#}.$\\

The coefficients given in \eqref{exp_other coefficients} have different significances regarding the Hopf bifurcation. The coefficient $S_1$ provides the direction of the Hopf bifurcation. The Hopf bifurcation is subcritical for $S_1<0$ while supercritical for $S_1>0$. The coefficient $S_2$ provides the necessary information about the periodic solution's stability. If $S_2<0$, periodic solutions are stable, while the periodic solutions are unstable if $S_2>0$. The coefficient $S_3$ is associated with the period of the periodic solutions that occur. The period decreases when $S_3<0$ and increases with $S_3>0$.

\section{Numerical Simulation}\label{sec_Numerical simulation}

This section provides an extensive numerical simulations that verify the theoretical findings. The default parameter values are listed in Table \ref{table_parameters}. Some of the parameter values fall within the $95\%$ credible intervals reported in Goldberg et al. \cite{goldberg2014consequences}, while the remaining parameters are chosen hypothetically for the purpose of analysis and based on satisfying the boundedness conditions of system \eqref{system_AMC-2025}.

\begin{figure}[H]
    \centering
    \includegraphics[width=1.0\linewidth]{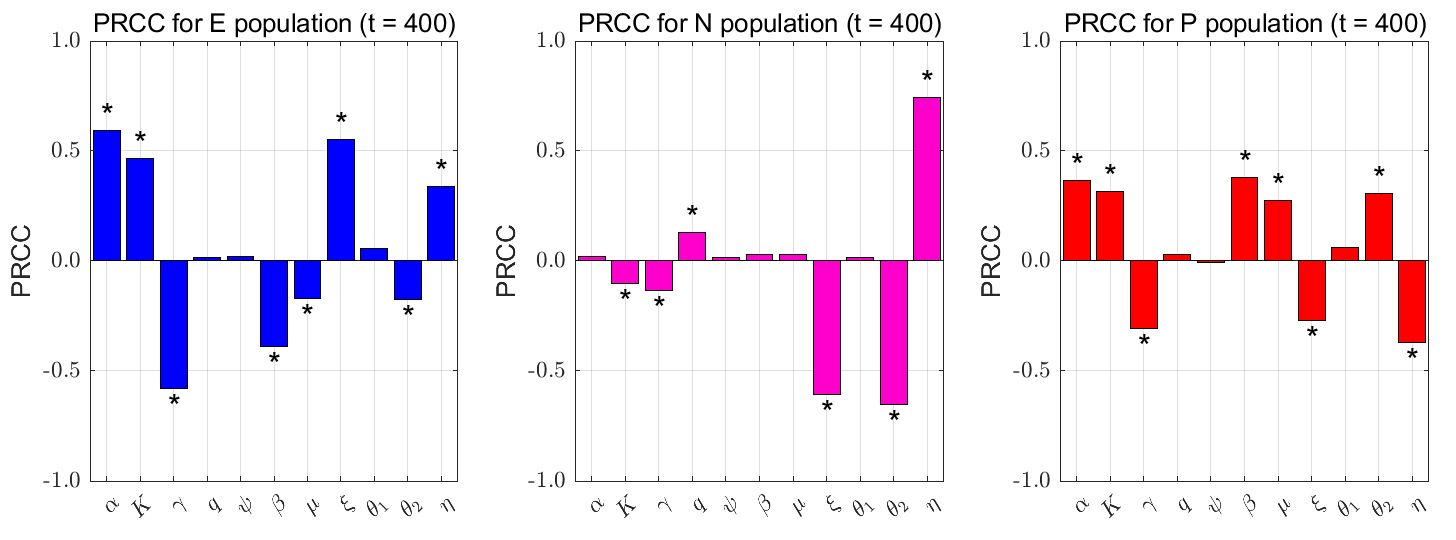}
    \caption{Figure shows the sensitivity of system \eqref{system_AMC-2025} parameters using the PRCC with t-statistics. Negative and positive bars indicate negative and positive correlations of system populations with respective parameters.}
    \label{fig_PRCC_sensitivity analysis}
\end{figure}

\begin{figure}[H]
    \centering
    \includegraphics[width=0.7\linewidth, height=0.4\linewidth]{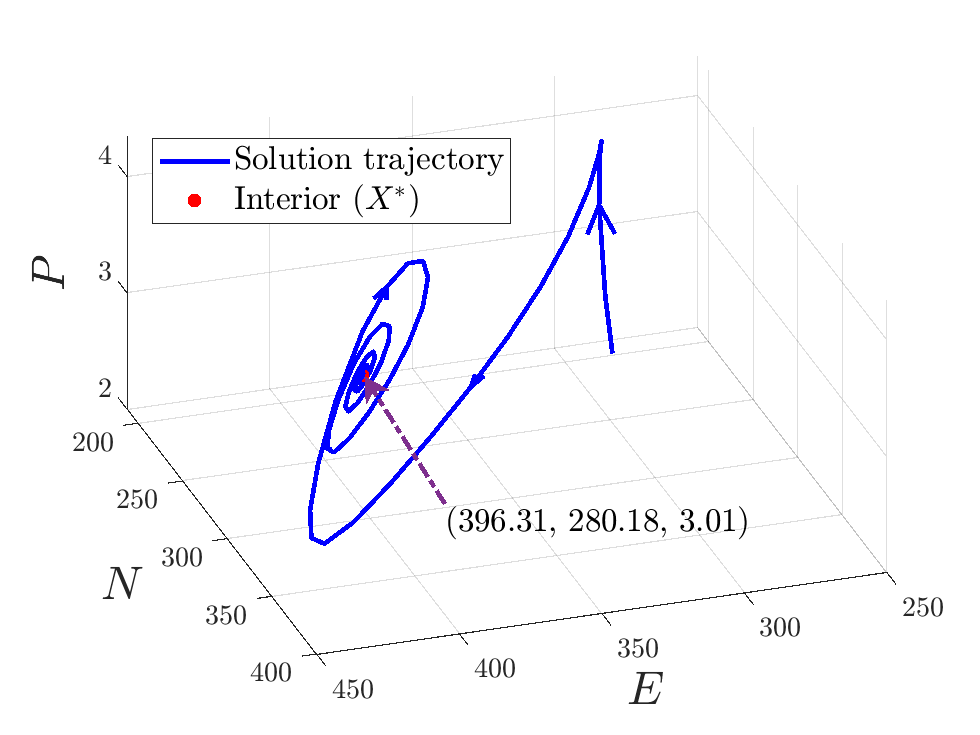}
    \caption{Figure interprets the solution trajectories of system \eqref{system_AMC-2025} are locally asymptotically stable to its coexistence equilibrium $X^*=\lx 396.31,\ 280.18,\ 3.01 \rx$ for parameter given in Table \ref{table_parameters}.}
    \label{fig_LAS-timeseries_coexistence}
\end{figure}

The Partial Rank Correlation Coefficient (PRCC) is employed to quantify the relative sensitivity of model outputs to variations in system parameters \cite{marino2008methodology}. In Figure \ref{fig_PRCC_sensitivity analysis}, we present the PRCC results for the parameters of system \eqref{system_AMC-2025}, computed around the baseline values specified in Table \ref{table_parameters}. Each parameter is varied within a $100\%$ interval of its baseline value. A total of $400$ time points are considered to capture the temporal influence of parameter perturbations on the population dynamics. The bar diagram illustrates the PRCC values, which lie within the interval $[-1,1]$, for each parameter in system \eqref{system_AMC-2025}. To assess statistical significance, the null hypothesis $H_0: \mathrm{PRCC}=0$ is tested individually for each parameter. The null hypothesis is rejected when $p<0.05$, indicating a statistically significant monotonic relationship between the parameter and the corresponding variable. Such significant parameters are denoted by a star $(*)$ in Figure \ref{fig_PRCC_sensitivity analysis}. The analysis reveals that all parameters, except $q$, $\psi$, and $\theta_1$, exert a statistically significant influence on the system populations.\\

\begin{figure}[]
    \centering
    \subfigure[$\g$ vs $\b$ biparametric stability region for the coexistence equilibrium $X^*$ of system \eqref{system_AMC-2025}.]{                         \includegraphics[width=0.47\linewidth]{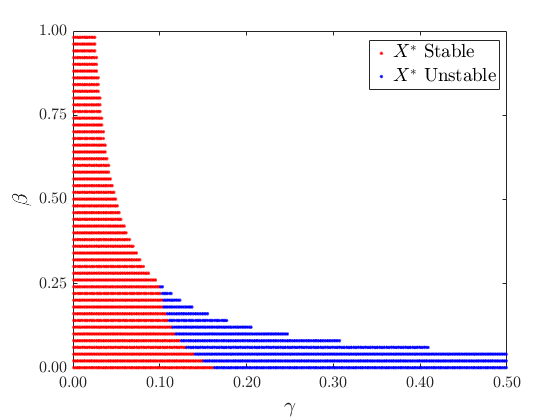}
        \label{subfig_stability region_gamma-beta}
    }
    \subfigure[$\g$ vs $\xi$ biparametric stability region for the coexistence equilibrium $X^*$ of system \eqref{system_AMC-2025}.]{                         \includegraphics[width=0.47\linewidth]{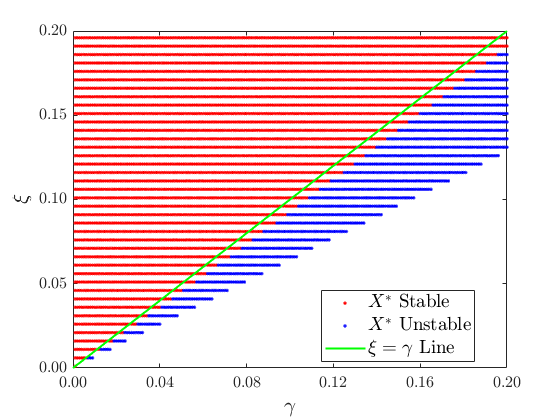}
        \label{subfig_stability region_gamma_xi}
    }
    \caption{Figure shows the stability region of the coexistence equilibrium $X^*$ of system \eqref{system_AMC-2025} in $(a) \g-\b$ and (b) $\g-\xi$ parametric planes. Red and blue dots in both subfigures represent the stable and unstable regions of $X^*$, respectively, whereas the white region indicates that $X^*$ does not exist. Other parameters are taken as in Table \ref{table_parameters}.}
    \label{fig_stability region_coexistence}
\end{figure} 
For the baseline parameter values provided in Table \ref{table_parameters}, system \eqref{system_AMC-2025} possesses three equilibria: the extinction equilibrium $X_0=(0.0,\ 0.0,\ 0.0)$, the equilibrium without the Banff townsite elk $X_1=(0.00,\ 300.00,\ 1.60)$, and the coexistence equilibrium $X^* = (396.31,\ 280.18,\ 3.01)$. Numerical simulations with initial condition $(E_0,\ N_0,\ P_0)=(340,\ 380,\ 4)$ show that the solution trajectories approach the coexistence equilibrium $X^*$, indicating that it is locally asymptotically stable for the given parameter set, as depicted in Figure \ref{fig_LAS-timeseries_coexistence}.\\

\begin{figure}[htp]
    \centering
    \subfigure[Bifurcation diagram of the Banff townsite elk population with respect to $\b$.]{\includegraphics[width=0.45\linewidth]{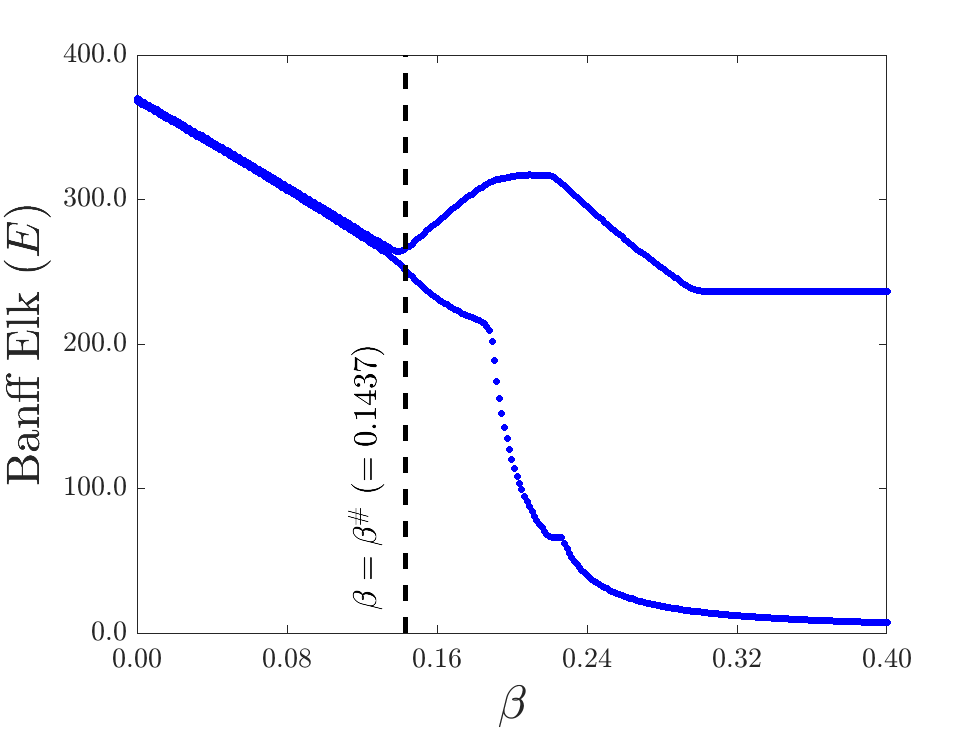}
        \label{subfig_bifurcation_E-beta}
    }\hspace{6pt}
    \subfigure[Bifurcation diagram of the Bow Valley elk population with respect to $\b$.]{\includegraphics[width=0.45\linewidth]{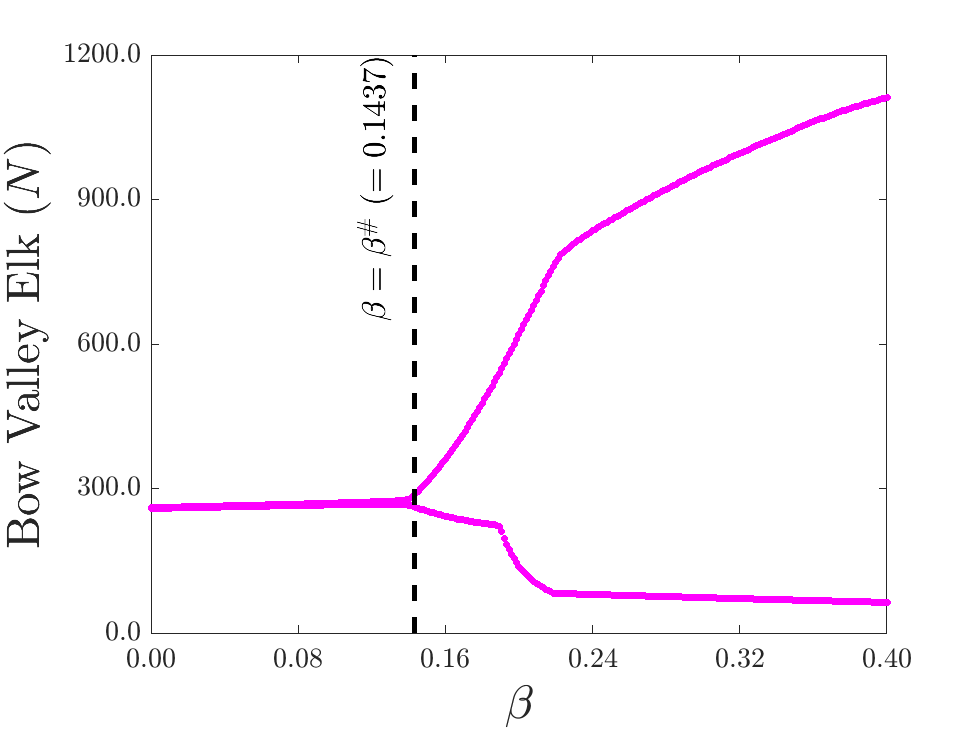}
        \label{subfig_bifurcation_N-beta}
    }
    \subfigure[Bifurcation diagram of the wolves population with respect to $\b$.]{\includegraphics[width=0.45\linewidth]{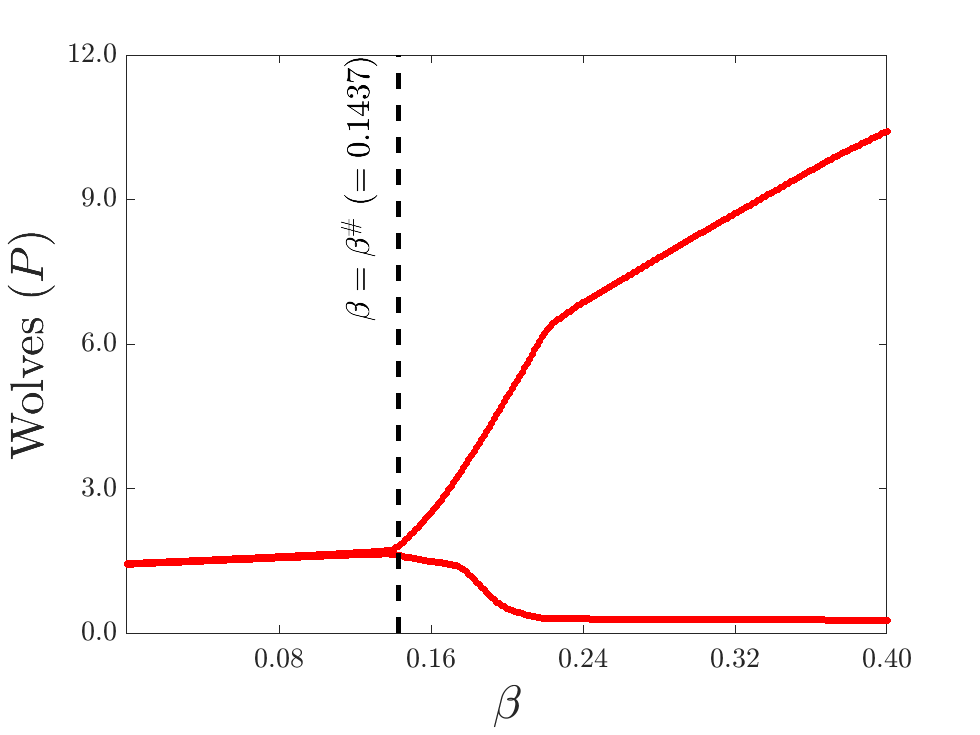}
        \label{subfig_bifurcation_P-beta}
    }
    \caption{Figure shows the bifurcation diagram of system \eqref{system_AMC-2025} with respect to the bifurcating parameter $\b$. It represents the occurrence of a Hopf bifurcation around the coexistence equilibrium at $\b=\b^\#\ (=0.1437)$ for the case $\g\ (=0.11)>\xi\ (=0.10)$. Other parameters are taken as in Table \ref{table_parameters}.}
    \label{fig_bifurcation_beta}
\end{figure}

The existence and stability regions of the coexistence equilibrium $X^*$ in selected two-parameter spaces are illustrated in Figure \ref{fig_stability region_coexistence}. In particular, subfigures \ref{subfig_stability region_gamma-beta} and \ref{subfig_stability region_gamma_xi} depict the existence and stability region of the coexistence equilibrium of system \eqref{system_AMC-2025} in $\g-\beta$ and $\g-\xi$ parameter planes, respectively, while all remaining parameters are fixed as listed in Table \ref{table_parameters}. In these diagrams, the red region corresponds to parameter combinations for which the coexistence equilibrium of system \eqref{system_AMC-2025} is locally asymptotically stable, whereas the blue region represents the existence of the coexistence when it is not stable. In the white region, the coexistence equilibrium does not exist. A notable feature emerging from these plots is that instability of the coexistence equilibrium occurs only when the interaction rate between Banff Townsite elk and wolves ($\g$) exceeds that between Bow Valley elk and wolves ($\xi$). Extensive numerical simulations further indicate that whenever $\g<\xi$, the coexistence equilibrium, provided it exists, remains locally asymptotically stable throughout the admissible parameter range. \\

\begin{figure}[htp]
    \centering
    \subfigure[Time evoltion of the Banff townsite elk population.]{\includegraphics[width=0.45\linewidth]{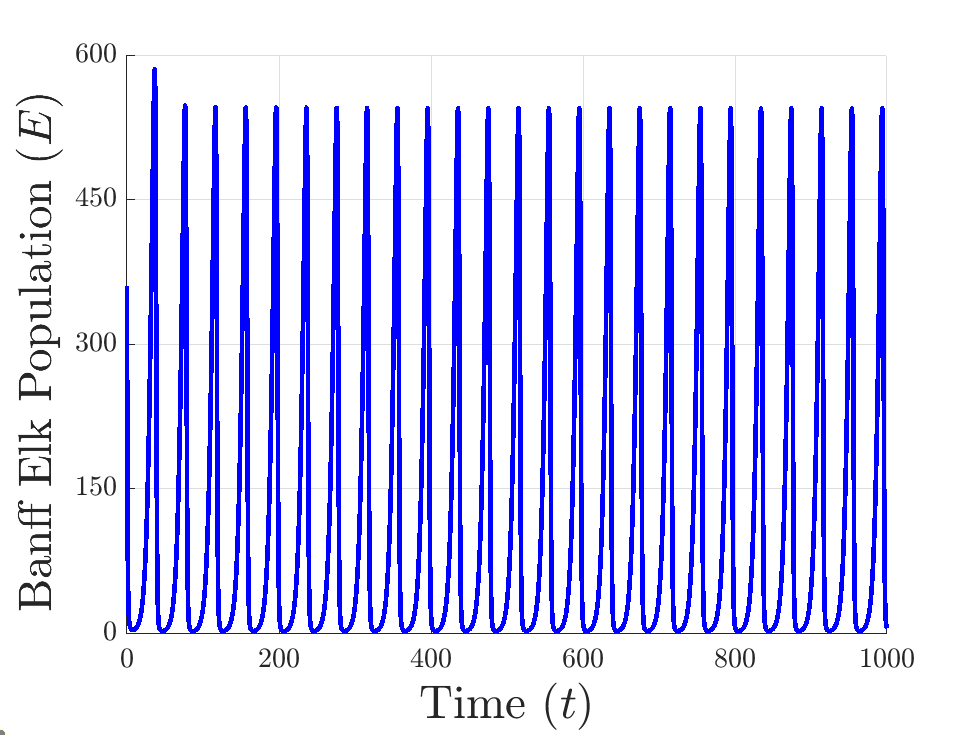}
        \label{subfig_Limit cycle_E}
    }\hspace{6pt}
    \subfigure[Time evoltion of the Bow Valley elk population.]{\includegraphics[width=0.45\linewidth]{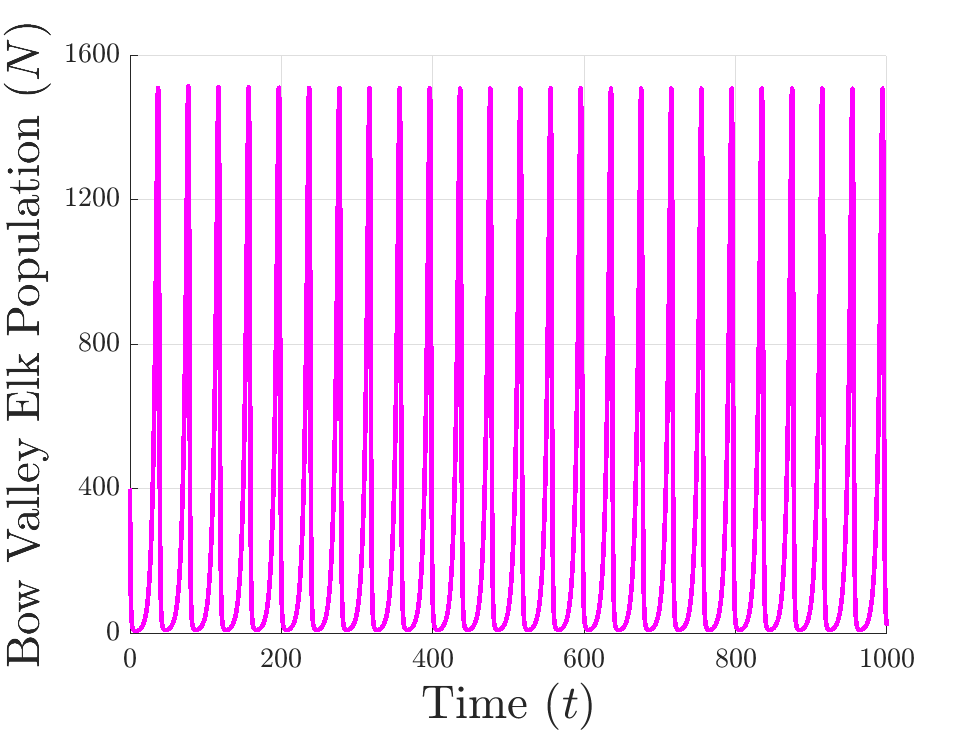}
        \label{subfig_Limit cycle_N}
    }
    \subfigure[Time evolution of the wolves population.]{\includegraphics[width=0.45\linewidth]{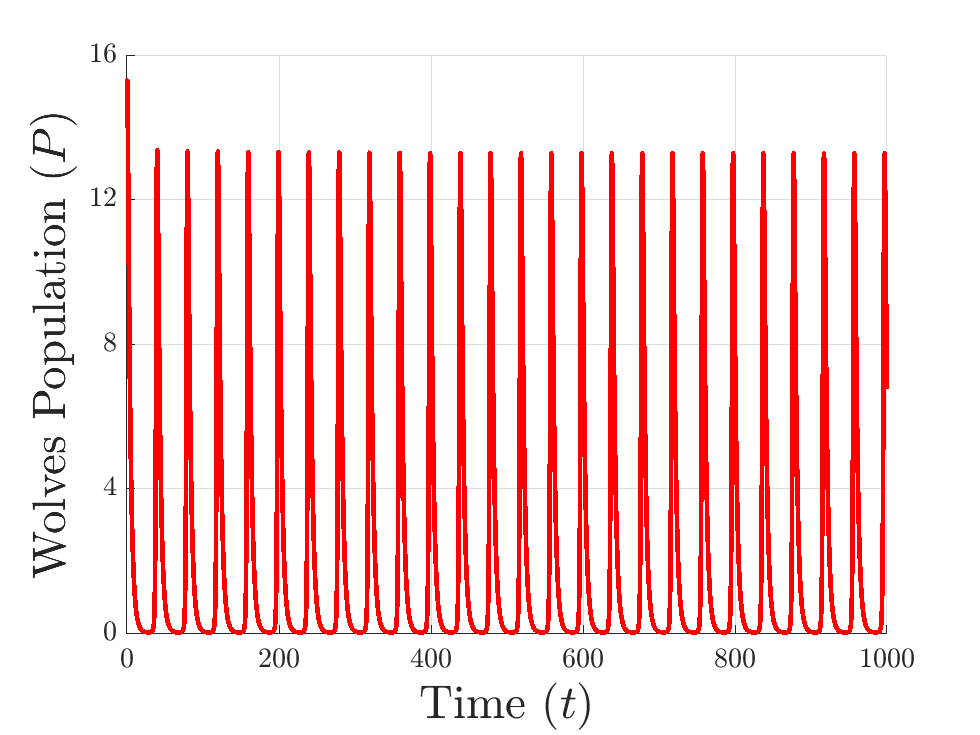}
        \label{subfig_Limit cycle_P}
    }\hspace{6pt}
    \subfigure[Phase portrait of system populations representing the limit cycle.]{\includegraphics[width=0.45\linewidth]{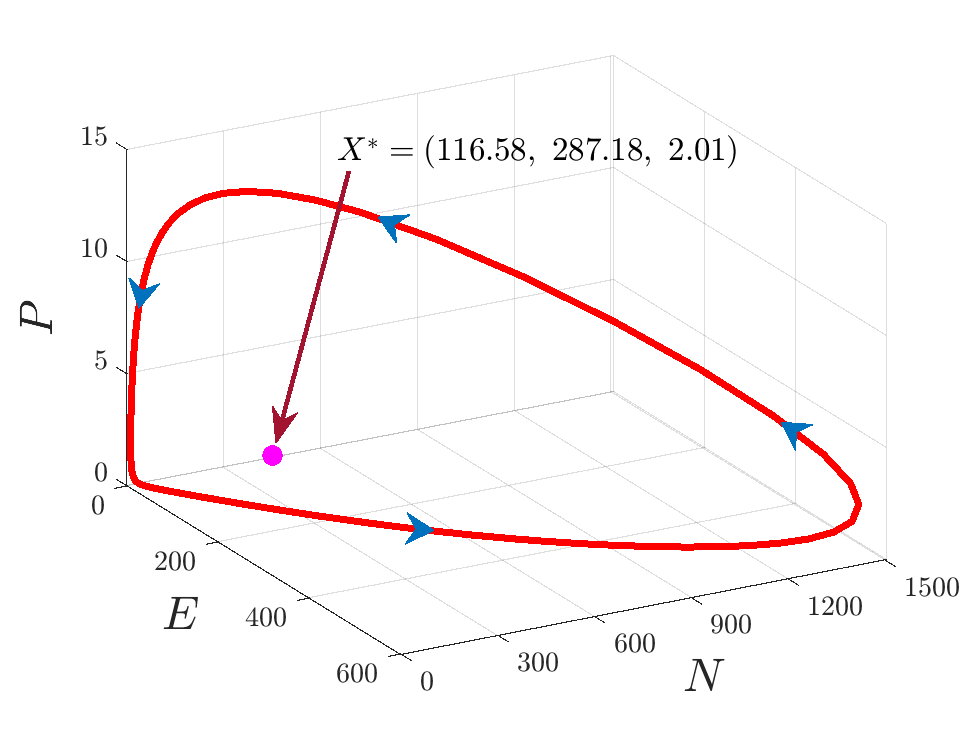}
        \label{subfig_Limit cycle_Phase portrait}
    }
    \caption{Figure shows the occurrence of a stable limit cycle around the coexistence equilibrium $(X^*)$ of system \eqref{system_AMC-2025} when $\g\ (=0.11)>\xi\ (=0.10)$ with other parameters as in Table \ref{table_parameters}.}
    \label{fig_Limit Cycle}
\end{figure}

For $\g=0.11$ (with $\g>\xi=0.10$) and the remaining parameters fixed at the baseline values given in Table \ref{table_parameters}, system \eqref{system_AMC-2025} undergoes a Hopf bifurcation at the coexistence equilibrium $X^*$ with respect to parameter $\b$. The corresponding bifurcation structure is illustrated in Figure \ref{fig_bifurcation_beta}. As $\b$ increases, the coexistence equilibrium $X^*$ loses its local asymptotic stability at the critical threshold $\b=\b^\#\ (=0.1437)$, leading to the emergence of a stable periodic orbit involving all three state variables. The bifurcation diagrams for the individual populations are presented in subfigures \ref{subfig_bifurcation_E-beta}, \ref{subfig_bifurcation_N-beta}, and \ref{subfig_bifurcation_P-beta}, which correspond to the Banff area elk, the Bow Valley elk, and the wolf populations, respectively, and clearly demonstrate the transition from steady-state coexistence to sustained oscillatory dynamics beyond the critical parameter value. \\

The time evolution and geometric structure of the periodic solution of system \eqref{system_AMC-2025}, with initial conditions $(E_0,\ N_0,\ P_0)=(360,\ 400,\ 15)$, corresponding to the stable limit cycle, are presented in Figure \ref{fig_Limit Cycle}. Subfigures \ref{subfig_Limit cycle_E}, \ref{subfig_Limit cycle_N}, and \ref{subfig_Limit cycle_P} display the temporal dynamics of the Banff townsite elk, the Bow Valley elk, and the wolf populations, respectively, highlighting sustained oscillations in each compartment. Furthermore, subfigure \ref{subfig_Limit cycle_Phase portrait} illustrates the associated phase portrait, which confirms the occurrence of a limit cycle encircling the coexistence equilibrium $X^*$, thereby demonstrating persistent periodic coexistence of all three populations.

\section{Conclusions}\label{sec_conclusions}

In this note, we have undertaken a detailed mathematical reassessment of the elk–wolf prey–predator model with inter-regional movements between refuge (Banff townsite area) and open habitat (Bow Valley region) proposed in \cite{maji2026persistence}. Through a systematic re-derivation of the analytical results, we clarified the conditions ensuring positivity, boundedness, and feasibility of equilibria. In particular, we established the precise parameter restrictions under which the system remains biologically meaningful and demonstrated that certain previously used parameter sets do not satisfy these boundedness requirements.\\

The existence and local stability conditions for all the equilibria of system \eqref{system_AMC-2025} were rigorously re-examined. Explicit expressions for the coexistence equilibrium were carefully derived, and the associated Routh–Hurwitz conditions were simplified to some extent. We further revisited the global stability analysis and identified necessary corrections in the construction and application of the Lyapunov function. By providing appropriate definiteness conditions and clarifying the required inequalities, we presented a mathematically sound framework for assessing global asymptotic stability of the coexistence equilibrium.\\

The bifurcation analysis was also reconsidered in detail. We re-derived the Hopf bifurcation conditions using the proper transversality criterion and obtained explicit expressions for the derivative of the critical eigenvalues with respect to the bifurcation parameter under the condition $\g>\xi$. Furthermore, using normal form theory, we outlined the correct procedure for computing the first Lyapunov coefficient and determining the direction, stability, and period variation of the bifurcating limit cycle. Our analysis highlights the importance of distinguishing between genuine Hopf bifurcation phenomena and dynamics arising from reduced subsystem behavior, such as Lotka–Volterra oscillations following species extinction.\\

Extensive numerical simulations were performed to validate the corrected theoretical results. Sensitivity analysis via PRCC, biparametric stability diagrams, bifurcation plots, and phase portraits were recalculated using parameter sets consistent with the analytical constraints. The numerical results confirm the revised stability and bifurcation structure and demonstrate the conditions under which sustained oscillations emerge.\\

Overall, this study provides mathematically consistent corrections and clarifications to the previously reported results. By refining the analytical framework and ensuring reproducibility of numerical findings, the present work contributes to a more rigorous understanding of refuge-mediated predator–prey systems and offers a reliable foundation for future investigations in ecological modeling and bifurcation analysis.

\section*{Acknowledgements}
RD acknowledges the financial support from the Ministry of Education (MoE), Govt of India.

\section*{Conflict of Interest}
The authors declare that they have no conflict of interest in the present study.

\section*{Data Availability}
The data used in this article are included in this article only.


\bibliographystyle{elsarticle-num}
\bibliography{Rajesh}
\end{document}